\title{Reduced Incidence algebras description of cobweb posets and KoDAGs}
\author{Ewa Krot-Sieniawska \\
\\Institute of Computer Science, Bia{\l}ystok University\\
PL-15-887 Bia{\l}ystok, ul.Sosnowa 64, POLAND\\
e-mail: ewakrot@wp.pl, ewakrot@ii.uwb.edu.pl}
\date{}
\chardef\bslash=`\\ 
\newtheorem{defn}{Definition}[]
\newtheorem{thm}{Theorem}[]
\newtheorem{prop}{Proposition}[]
\newtheorem{cor}{Corollary}[]
\newcommand{\inc}[2]{\left\langle#1 \atop #2\right\rangle}
\newcommand{\naturals}{{\mathbb N}}
\begin{document}
\maketitle
\begin{abstract}
\noindent The notion of reduced incidence algebra of an arbitrary
cobweb poset is delivered.

\end{abstract}
 \small{KEY WORDS:
 cobweb poset,  incidence algebra of locally finite poset, order compatible equivalence relation, reduced incidence algebra .}\\
AMS Classification numbers: 06A06,  06A07, 06A11,  11C08, 11B37\\

\noindent Presented at Gian-Carlo Rota Polish Seminar:
http://ii.uwb.edu.pl/akk/sem/sem rota.htm
\section{ Cobweb posets}
The   family of the so called cobweb posets $\Pi$ has been
invented by A.K.Kwa\'sniewski few years ago (for references  see:
\cite{44,46}). These structures are such a generalization of the
Fibonacci tree growth  that allows joint combinatorial
interpretation for all of them under the admissibility condition
(see \cite{49,49a}).

\noindent Let $\{F_n\}_{n\geq 0}$ be a natural numbers valued
sequence with $F_0=1$ (with $F_0=0$ being exceptional as in case
of Fibonacci numbers). Any sequence satisfying this property
uniquely designates cobweb poset  defined as follows.

\noindent For $s\in\bf{N}_0=\bf{N}\cup\{0\}$ let us to define
levels of
 $\Pi$:
$$\Phi_{s}=\left\{\langle j,s \rangle ,\;\;1\leq j \leq F_{s}\right\},\;\;\;$$
(in case of $F_0=0$  level $\Phi_0$ corresponds to the empty root
$\{\emptyset\}$). )

\noindent Then

\begin{defn}
Corresponding cobweb poset is  an infinite partially ordered set
$\Pi=(V,\leq)$, where
 $$ V=\bigcup_{0\leq s}\Phi_s$$
 are the elements ( vertices) of $\Pi$ and the partial order relation $\leq$ on $V$ for
 $x=\langle s,t\rangle, y=\langle u,v\rangle $ being  elements of
cobweb poset $\Pi$ is defined by  formula
$$ ( x \leq_{P} y) \Longleftrightarrow
 [(t<v)\vee (t=v \wedge s=u)].$$
\end{defn}

\noindent Obviously any cobweb poset can be represented, via its
Hasse diagram, as infinite directed  graf  $\Pi=\left(
V,E\right)$, where  set $V$ of its vertices is defined as above
and

$$E =\{\left(\langle j , p\rangle,\langle q ,(p+1) \rangle
\right)\}\;\cup\;\{\left(\langle 1 , 0\rangle ,\langle 1 ,1
\rangle \right)\},$$ \quad where $1 \leq j \leq {F_p}$ and $1\leq
q \leq {F_{(p+1)}}$ stays for  set of (directed) edges.

\noindent The Kwa\'sniewski cobweb posets under consideration
represented by  graphs  are examples of oderable directed acyclic
graphs (oDAG)  which we start to call from now in brief:  KoDAGs.
These are  structures of universal importance for the whole of
mathematics - in particular for discrete "`mathemagics"'
[http://ii.uwb.edu.pl/akk/ ]  and computer sciences in general
(quotation from \cite{49,49a} ):

\begin{quote}
For any given natural numbers valued sequence the graded (layered)
cobweb posets` DAGs  are equivalently representations of a chain
of binary relations. Every relation of the cobweb poset chain is
biunivocally represented by the uniquely designated
\textbf{complete} bipartite digraph-a digraph which is a
di-biclique  designated  by the very  given sequence. The cobweb
poset is then to be identified with a chain of di-bicliques i.e.
by definition - a chain of complete bipartite one direction
digraphs.   Any chain of relations is therefore obtainable from
the cobweb poset chain of complete relations  via
deleting  arcs (arrows) in di-bicliques.\\
\end{quote}

\noindent According to the  definition above arbitrary cobweb
poset $\Pi=(V,\leq)$ is a graded poset (ranked poset) and for
$s\in\bf
 N_0$:
 $$x\in\Phi_s\;\; \longrightarrow\;\; r(x)=s,$$
 where $r:\Pi\rightarrow \bf N_0$ is a rank function on $\Pi$.

\noindent Let us then define Kwa\'sniewski finite cobweb
sub-posets as follows
 \begin{defn}
 Let $P_n=(V_n,\leq)$, $(n\geq 0)$, for ${\displaystyle
V_n=\bigcup_{0\leq s\leq n}}\Phi_s$ and  $\leq$ being the induced
partial order relation on $\Pi$.
\end{defn}

\noindent Its easy to see that $P_n$ is ranked poset with rank
function $r$ as above. $P_n$ has a unique minimal element
$0=\langle 1,0\rangle$ ( with $r(0)=0$). Moreover $\Pi$ and all
$P_n$ s are locally finite, i.e. for any pair $x,y\in \Pi$, the
segment $[x,y]=\{z\in\Pi:\,x\leq z\leq y\}$ is finite.

Let us recall that one defines the incidence algebra of a locally
finite partially ordered set $P$ as follows (see \cite{65, 73,
74}):
$$ { I(P)}=I(P,R)=\{f: P\times  P\longrightarrow  R;\;\;\;\; f(x,y)=0\;\;\;unless\;\;\; x\leq y\}.$$
The sum of two such functions $f$ and $g$ and multiplication by
scalar are defined as usual. The product $h=f\ast g$ is defined as
follows:
$$ h(x,y)=(f\ast g)(x,y)=\sum_{z\in {\bf P}:\;x\leq z\leq y} f(x,z)\cdot g(z,y).$$
It is immediately verified that this is an associative algebra
(with  an identity element $\delta (x,y)$, the Kronecker delta),
over  any associative ring  R.

In \cite{37b} the incidence algebra of an arbitrary cobweb poset
$\Pi$ ( or its subposets $P_n$) uniquely designated
 by the natural numbers valued sequence $\{F_n\}_{n\geq 0}$, was considered by the present author.
The  explicit formulas for some typical elements  of incidence
algebra   ${I(\Pi)}$ of $\Pi$ where delivered there.

So for $x,y$ being some arbitrary elements of $\Pi$ such that
$x=\langle s,t\rangle$, $y=\langle u,v\rangle $, $(s,u\in\bf{N}$,
$t,v\in \bf{N}_0)$, $1\leq s\leq F_t$   and $1\leq u\leq F_v$ one
has:

\renewcommand{\labelenumi}{(\arabic{enumi})}
\begin{enumerate}
\item $\zeta$ function of ${\Pi}$ being a characteristic
 function of partial order in $\Pi$
  \begin{equation}
\zeta(x,y)=\zeta\left( \langle s,t\rangle ,\langle u,v \rangle
\right)=\delta (s,u)\delta (t,v)+\sum_{k=1}^{\infty}\delta(t+k,v),
\end{equation}
one can also verify, that $\zeta^k$ enumerates all multichains of
length $k$, \item M\"{o}bius function of $\Pi$ being a inverse of
$\zeta$
\begin{equation}\label{mobius2}
\mu(x,y)=\delta(t,v)\delta(s,u)-\delta(t+1,v)+\sum_{k=2}^{\infty}\delta(t+k,v)(-1)^{k}
\prod_{i=t+1}^{v-1}(F_{i}-1),
\end{equation}
\item function $\zeta ^{2}=\zeta \ast \zeta$ counting the number
of elements in the segment $\left[ x,y\right]$

\begin{equation}
\begin{array}{lll}
\zeta^{2}(x,y)=  card \left[ x, y\right] & = &
 \left({\displaystyle\sum_{i=t+1}^{v-1}}F_{i}\right)+2 ,\\
\end{array}\end{equation}
\item function $\eta$
\begin{equation}
\eta(x,y)=\sum_{k=1}^{\infty}\delta(t+k,v)=\left\{\begin{array}{ccc}
  1 &  & t<v \\
  0 &  & w\;p.p. \\
\end{array}\right.,\end{equation}
\item function $\eta ^{k}(x,y),\;\; (k\in {\bf N})$ counting the
number of chains of length $k$, (with $(k+1)$ elements) from $x$
to $y$
\begin{equation}
\begin{array}{lll}
   \eta^k(x,y) & = & {\displaystyle \sum_{x< z_1<z_2<...<z_{k-1}<
   y}}1\\&&\\
   & =&{\displaystyle \sum_{t<i_1<i_2<...<i_{k-1}<v}} F_{i_1}F_{i_2}...F_{i_{k-1}},\\
 \end{array}\end{equation}
 \item function $\mathcal{C}$
 \begin{equation}\mathcal{C}(\langle s,t\rangle , \langle
u,v\rangle)=
\delta(t,v)\delta(s,u)-\sum_{k=1}^{\infty}\delta(t+k,v),\end{equation}
such that its inverse function $\mathcal{C}^{-1}(x,y)$ counts the
number of all chains from $x$ to $y$' \item function $\chi$
\begin{equation} \chi(x,y)=\delta(t+1,v),\end{equation}
\item function $\chi ^{k}(x,y),\;\; (k\in {\bf N})$ counting the
number of maximal chains of length $k$, (with $(k+1)$ elements)
from $x$ to $y$
\begin{equation}
\chi^k(x,y)=\sum_{x\lessdot z_1\lessdot ...\lessdot
z_{k-1}\lessdot y}1=\delta (t+k,v)F_{t+1}F_{t+2}...F_{v-1}.
\end{equation}
\item function $\mathcal{M}$
\begin{equation}\mathcal{M}(\langle s,t\rangle ,\langle
u,v\rangle)=\delta(t,v)\delta(s,u)-\delta(t+1,v),\end{equation}
such that its inverse function  $\mathcal{M}^{-1}$ counts the
number of all maximal chains from $x$ to $y$.

\end{enumerate}

In this paper the notion of the standard reduced incidence algebra
\cite{24,73,74} of  an arbitrary cobweb poset $\Pi$ will be
delivered. As we shall see,
 it enables us  for example to facilitate the formulas presented
 above. The results presented below stay true when considering
 finite subposets $P_n$ defined above.

 \section{The Standard Reduced  Incidence Algebra of an arbitrary
 cobweb poset}

 Let $S(\Pi)$  be the set of all segments in $\Pi$ and let $\sim$ be the equivalence relation  $\sim \subseteq S(\Pi)\times
S(\Pi)$

Let us recall that $\sim$ is compatible (\cite{24}), i.e. it
satisfies the following condition:  if $f$ and $g$ belong to the
incidence algebra $I(P)$ and $f(x,y)=f(u,v)$ as well as
$g(x,y)=g(u,v)$ for all pairs of segments such that $[x,y]\sim
[u,v]$, then $(f\ast g)(x,y)=(f\ast g)(u,v)$.

 The equivalence classes of segments of $\Pi$ relative to $\sim$ are called
 types.
 The set of all functions defined on types (i.e. all functions taking the same value on equivalent
 segments) forms an associative algebra with identity. One calls
 it the reduced incidence algebra $R(\Pi,\sim)$ (modulo the the equivalence relation
 $\sim$). Let us note that $R(\Pi,\sim)$ is isomorphic to a
 subalgebra of the $I(\Pi)$, (\cite{24}).

  Now let be $\sim$ defined as follows

\begin{equation}  \label{sim} [x,y]\sim [u,v]
\Longleftrightarrow\; segments \; [x,y],
 [u,v]\; are\; isomorphic .\end{equation}

 One can show that it is order compatible. Then one calls
 $R(\Pi,\sim)=R(\Pi)$ the standard reduced incidence algebra of
 $\Pi$. Also from the definitions of  $\sim$  and partial order on
 $\Pi$  one infers that
$$[x,y]\sim [u,v]\Longleftrightarrow \left[ r(x)=r(u)\wedge r(y)=r(v)\right].$$
So let $T$ be the set of types of relation $\sim$ defined above.
Then
$$T=\{(k,\,n):\;k,n\in\bf{N}_0\}$$
 and for $k\leq n$
\begin{equation}
(k,\,n)=\{[x,y]\in S(\Pi)\,:\;r(x)=k \wedge\,r(y)=n\},
\end{equation}
 or equivalently
$$(k,\,n)=\{[x,y]\in S(\Pi)\,:\;x\in\Phi_k \wedge\,y\in\Phi_n\}.$$
Also let $(k,n)=\emptyset$ for $k>n.$
\begin{defn}
Let $[x,y]\in\alpha_{k,\,n}$. For $l\in\naturals_0$  one can
define the incidence coefficients in $R(\Pi)$ as follows
\begin{equation}
\inc{k,\,n}{l}=\left|\left\{z\in [x,y]: [x,z]\in(k,\,l)\wedge
[z,y]\in (l,\,n)\right\}\right|.
\end{equation}
\end{defn}
The the following formula holds.
\begin{prop}
\begin{equation}
\inc{k,\,n}{l}=\left\{\begin{array}{ccc}
  F_l &  & k\leq l\leq n \\
  0 &  & othervise \\
\end{array}\right.
\end{equation}\end{prop}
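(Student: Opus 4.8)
The plan is to unwind the definition of the incidence coefficient into a concrete vertex-counting problem and then read off the answer from the order relation of $\Pi$. First I would fix, once and for all, a representative segment $[x,y]$ of the type $(k,n)$, say $x=\langle a,k\rangle$ and $y=\langle b,n\rangle$; this is legitimate because $\sim$ is order compatible, so the value $\inc{k,\,n}{l}$ does not depend on the chosen representative. For $z\in[x,y]$ the two membership conditions unwind as follows: $[x,z]\in(k,l)$ forces $r(z)=l$ together with $k\le l$ (since the type $(k,l)$ is empty when $k>l$), and likewise $[z,y]\in(l,n)$ forces $r(z)=l$ together with $l\le n$. Since $z\in[x,y]$ already guarantees $x\le z\le y$, the two conditions collapse to the single requirement that $r(z)=l$ with $k\le l\le n$. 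Hence $\inc{k,\,n}{l}$ equals the number of rank-$l$ vertices lying in the segment $[x,y]$, and the whole problem is reduced to counting these.

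The \emph{otherwise} branch is then immediate: if $l<k$ or $l>n$, one of the required types $(k,l)$, $(l,n)$ is empty, so no admissible $z$ exists and the coefficient is $0$. For the interior range $k<l<n$ I would argue that the segment swallows the entire level $\Phi_l$. Indeed, take any $z\in\Phi_l$; then $r(z)=l$, and since $x$ has strictly smaller rank the first (strict-inequality) clause of the order gives $x\le z$, while $z$ having strictly smaller rank than $y$ gives $z\le y$, so $z\in[x,y]$. Conversely every rank-$l$ vertex of $[x,y]$ is by definition a vertex of $\Phi_l$. Therefore the rank-$l$ vertices of the segment are exactly all of $\Phi_l$, and their number is $|\Phi_l|=F_l$, as claimed.

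The delicate point — the step I expect to be the main obstacle — is the pair of boundary values $l=k$ and $l=n$, where the clean \emph{all of $\Phi_l$ lies in $[x,y]$} argument no longer applies, since the order does not make the opposite endpoint comparable for free. At $l=k$ the requirement $x\le z$ with $r(z)=k$ falls under the equal-rank clause of the order, which forces $z=x$; symmetrically, at $l=n$ one is forced to $z=y$. Reconciling these degenerate counts with the uniform value $F_l$ is exactly where I would concentrate the argument, invoking the conventions on the extreme levels (in particular $F_0=1$ for the root level) to close the case; this is the only place where the statement is not a direct consequence of the ``strictly lower rank implies comparable'' rule, and hence the natural focus of the full proof.
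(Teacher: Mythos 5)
Your reduction of $\inc{k,\,n}{l}$ to counting the rank-$l$ vertices of a fixed representative segment $[x,y]$, and your observation that for $k<l<n$ the strict-rank clause of the order puts the \emph{entire} level $\Phi_l$ inside $[x,y]$, giving the count $|\Phi_l|=F_l$, is precisely the argument the paper leaves implicit: the Proposition is stated there with no proof at all, and is simply invoked inside the proof of the convolution formula that follows it. For the interior range $k<l<n$ your argument is complete and correct, and it is essentially the only possible route.

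The boundary cases, however, are not a gap you failed to close --- they are a point where the printed statement itself fails, and your own analysis already shows why. As you note, at $l=k$ the condition $x\le z$ with $r(z)=r(x)$ falls under the equal-rank clause $(t=v \wedge s=u)$ of the order, forcing $z=x$; hence $\inc{k,\,n}{k}=1$, and symmetrically $\inc{k,\,n}{n}=1$ (and $\inc{k,\,k}{k}=1$). No convention rescues the uniform value: $F_0=1$ covers only $k=0$, and for any sequence with $F_k>1$ the claimed value $F_k$ at $l=k$ is simply wrong. The corrected statement is $\inc{k,\,n}{l}=F_l$ for $k<l<n$, $=1$ for $l\in\{k,n\}$, $=0$ otherwise. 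You can cross-check this against the paper's own formula (3): $\zeta^2(x,y)=\mathrm{card}\,[x,y]=\bigl(\sum_{i=t+1}^{v-1}F_i\bigr)+2$, i.e.\ each endpoint level contributes $1$, not $F_k$ and $F_n$, whereas the Proposition as printed would yield $\mathrm{card}\,[x,y]=\sum_{l=k}^{n}F_l$. The same endpoint correction propagates to the convolution in the next proposition, whose correct form is $(f\ast g)(k,n)=f(k,k)g(k,n)+\sum_{k<l<n}F_l\,f(k,l)g(l,n)+f(k,n)g(n,n)$. So your strategy is sound and your instinct about where the difficulty sits is exactly right; the resolution is not to reconcile the degenerate counts with $F_l$, but to amend the statement, since the reconciliation you hoped for is impossible for general $F$.
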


Now one can define the product $\ast$ in $R(\Pi)$ as follows.
\begin{prop} Let  $[x,y]\in (k,\,n)$, ($k \leq n$) and
$f,g\in R(\Pi)$. Then
\begin{equation}
(f\ast g)(k,\,n)={\displaystyle \sum_{ l\geq
0}}F_lf(k,\,l)g(l,\,n)
\end{equation}
with the assumption that $(k,\,n)=\emptyset$, for $l<k$ or
$l>n$.\end{prop}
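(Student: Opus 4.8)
The plan is to reduce the statement to the ordinary convolution in the full incidence algebra $I(\Pi)$ and then regroup the summation index by rank, invoking the preceding Proposition to count the vertices of each rank.

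First I would fix a representative segment $[x,y]\in(k,n)$ with $r(x)=k$ and $r(y)=n$, and start from the definition of the product in $I(\Pi)$,
$$(f\ast g)(x,y)=\sum_{z\,:\,x\leq z\leq y} f(x,z)\,g(z,y).$$
Because $f$ and $g$ lie in $R(\Pi)$, they are constant on types; hence for any $z\in[x,y]$ with $r(z)=l$ one has $f(x,z)=f(k,l)$ and $g(z,y)=g(l,n)$. Before anything else I would record that this is exactly where the order-compatibility of $\sim$ noted above is needed: it guarantees that $f\ast g$ is again constant on types, so that the left-hand side $(f\ast g)(k,n)$ is well defined independently of the chosen representative $[x,y]$.

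Next I would partition the index set $\{z:x\leq z\leq y\}$ according to the rank $l=r(z)$. Since $\Pi$ is graded, every $z$ in the segment satisfies $k\leq l\leq n$, so
$$(f\ast g)(x,y)=\sum_{l=k}^{n}\;\sum_{\substack{z\in[x,y]\\ r(z)=l}} f(x,z)\,g(z,y)
=\sum_{l=k}^{n} f(k,l)\,g(l,n)\,\bigl|\{z\in[x,y]:r(z)=l\}\bigr|,$$
where the factors $f(k,l)$ and $g(l,n)$ are pulled out of the inner sum precisely because they do not depend on the particular $z$ of rank $l$.

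Finally I would identify the remaining cardinality with the incidence coefficient: by definition $\bigl|\{z\in[x,y]:r(z)=l\}\bigr|=\inc{k,n}{l}$, and by the preceding Proposition this equals $F_l$ for $k\leq l\leq n$ and $0$ otherwise. Substituting, and then extending the summation to all $l\geq 0$ (the extra terms vanishing by the convention $(k,n)=\emptyset$ for $l<k$ or $l>n$), yields the claimed formula $(f\ast g)(k,n)=\sum_{l\geq 0}F_l\,f(k,l)\,g(l,n)$. The only genuine work lies in the middle step — the regrouping by rank together with the constancy argument that replaces the vertex count by $F_l$ — and I expect the subtlest point to be the well-definedness on types, namely checking that the whole computation is independent of the representative segment, which is underwritten by the compatibility of $\sim$.
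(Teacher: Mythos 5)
Your proof is correct and follows essentially the same route as the paper: both reduce to the convolution in $I(\Pi)$, regroup the sum over $z\in[x,y]$ by the rank $l=r(z)$ (equivalently, by the types of $[x,z]$ and $[z,y]$), and invoke the preceding Proposition to replace the count of rank-$l$ vertices by $F_l$. Your added remarks on well-definedness via order-compatibility and on extending the sum to all $l\geq 0$ merely make explicit what the paper leaves implicit.
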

\begin{proof}\textrm{}\\
$
\begin{array}{lll}
  (f\ast g)({k,\,n}) & = & (f\ast g)(x,y)={\displaystyle \sum_{x\leq z\leq y}}f(x,z)g(z,y)
  \\&&\\
   & = & {\displaystyle \sum_{k\leq l\leq n}\sum_{\{z:[x,z]\in(k,\,l),\;[z,y]\in(l,\,n)\}}}f(x,z)g(z,y)
   \\&&\\
   & = & {\displaystyle \sum_{k\leq l\leq n}}F_lf({k,\,l})g({l,\,n})
   \\&&\\
    & = & {\displaystyle \sum_{ l\geq 0}}F_lf({k,\,l})g({l,\,n}), \\
\end{array}$
\end{proof}
So we have proved

\begin{thm}
Let  $F=\{F_n\}_{n\geq 0}$ z $F_0=1$, be an arbitrary natural
numbers valued sequence with $F_0=1$(with $F_0=0$ being
exceptional). The the numbers  $F_n$ ($n\geq 0$)  are the
incidence coefficients in the standard reduced algebra $R(\Pi)$ of
cobweb poset $\Pi$ uniquely designated by the sequence
$F=\{F_n\}_{n\geq 0}$.
 \end{thm}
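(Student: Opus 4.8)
The plan is to obtain the theorem as a direct consequence of the two preceding propositions, whose joint content is exactly the statement that the nonzero structure constants of the product in $R(\Pi)$ are the sequence values $F_l$. Since by Proposition 2 the product on $R(\Pi)$ is completely determined by the coefficients standing in front of $f(k,l)g(l,n)$, it will be enough to show that these coefficients coincide with the incidence coefficients $\inc{k,n}{l}$ of the Definition and that each of them equals $F_l$ on its admissible range $k\leq l\leq n$. First I would fix a representative segment $[x,y]$ of the type $(k,n)$, with $x\in\Phi_k$ and $y\in\Phi_n$, and rewrite the defining condition for $\inc{k,n}{l}$ as the level-theoretic requirement that $z\in[x,y]$ and $r(z)=l$, i.e.\ $z\in\Phi_l\cap[x,y]$.

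The key step is the counting argument underlying Proposition 1. Here I would use the di-biclique (complete bipartite) structure of $\Pi$ stressed in Section 1: because the order satisfies $\langle s,t\rangle\leq\langle u,v\rangle\Longleftrightarrow(t<v)\vee(t=v\wedge s=u)$, any two vertices on distinct levels are automatically comparable in the correct direction. Hence for an interior rank $l$ with $k<l<n$ every vertex of $\Phi_l$ satisfies $x\leq z$ (as $k<l$) and $z\leq y$ (as $l<n$), so $\Phi_l\cap[x,y]=\Phi_l$ and the count is $|\Phi_l|=F_l$. This is the heart of the matter and establishes $\inc{k,n}{l}=F_l$ on the interior.

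Next I would record why this passes to the reduced algebra at all. The value $|\Phi_l|=F_l$ depends only on the three levels $k,l,n$ and not on the particular endpoints $x,y$ chosen inside the type $(k,n)$; this is precisely the order-compatibility of $\sim$ invoked in Section 2, so that $\inc{k,n}{l}$ is a genuine function of types. Feeding this into the unreduced product $(f\ast g)(x,y)=\sum_{x\leq z\leq y}f(x,z)g(z,y)$ and sorting the summands by $l=r(z)$ collapses each block of $F_l$ coincident terms into $F_l\,f(k,l)g(l,n)$, which is the formula of Proposition 2; reading off its coefficients then yields the theorem, namely that the numbers $F_n$ are the incidence coefficients of $R(\Pi)$.

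The step I expect to be the main obstacle is the boundary behaviour at $l=k$ and $l=n$. There the same-level clause $t=v\wedge s=u$ forces $z=x$ (respectively $z=y$), so the direct count of rank-$l$ vertices of the segment is $1$ rather than $F_k$ (respectively $F_n$); this is already visible in the segment-cardinality formula $\mathrm{card}[x,y]=\left(\sum_{i=k+1}^{n-1}F_i\right)+2$, where the two extreme levels each contribute a single vertex. The substantive and fully provable assertion is therefore the interior identity $\inc{k,n}{l}=F_l$ for $k<l<n$, while the two boundary terms enter the product only through the diagonal values $f(k,k)$ and $g(n,n)$ and must be phrased with care so that the reduced product agrees with the unreduced one. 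Once the interior coefficients are identified with the $F_l$, the theorem's claim that the sequence $\{F_n\}$ supplies the incidence coefficients of $R(\Pi)$ follows.
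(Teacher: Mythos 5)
Your proposal follows the same route as the paper: the theorem is obtained by reading off the coefficients from the two preceding propositions, with Proposition 1 proved by the level-counting observation that comparability across distinct levels gives $\Phi_l\cap[x,y]=\Phi_l$ for interior ranks, and Proposition 2 by sorting the convolution $\sum_{x\leq z\leq y}f(x,z)g(z,y)$ according to $l=r(z)$ and collapsing each block into $F_l\,f(k,l)g(l,n)$.

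However, the boundary caveat you raise is not a mere presentational worry: it is an actual error in the paper which you have correctly identified and repaired. The paper asserts $\inc{k,\,n}{l}=F_l$ on the whole range $k\leq l\leq n$ and accordingly writes $(f\ast g)(k,\,n)=\sum_{k\leq l\leq n}F_l f(k,\,l)g(l,\,n)$; but at $l=k$ the same-level clause of the order forces $z=x$ (and at $l=n$ forces $z=y$), so in fact $\inc{k,\,n}{k}=\inc{k,\,n}{n}=1$, and the correct product is $(f\ast g)(k,\,n)=f(k,k)g(k,n)+\sum_{k<l<n}F_l f(k,l)g(l,n)+f(k,n)g(n,n)$, exactly as you state. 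This is corroborated internally by the paper itself: the segment-cardinality formula $\zeta^2(x,y)=\mathrm{card}[x,y]=\bigl(\sum_{i=t+1}^{v-1}F_i\bigr)+2$ is what your corrected product yields, whereas Proposition 2 as printed would give $\zeta^2(k,n)=\sum_{l=k}^{n}F_l$, which disagrees whenever $F_k\neq 1$ or $F_n\neq 1$. So your version --- restricting the identity $\inc{k,\,n}{l}=F_l$ to interior $l$ and treating the two endpoint terms separately --- is the correct form of the paper's argument, and the theorem survives in the shape you give it: the sequence values $F_l$ are precisely the (interior) incidence coefficients of $R(\Pi)$.
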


One can also show the following
\begin{thm} Let $f\in R(\Pi)$. Then for  $x,y\in \Pi$ such that $[x,y]\in(k,\,n)$ the value $f(x,y)$ depends on
$r(x)=k$ and $r(y)=n$) only, i.e.
\begin{equation}
f(x,y)=f(k,n)=f(r(x),r(y)).
\end{equation}
From the definition of partial order on  $\Pi$ one can also infer
that for $x,y\in \Pi$ satisfying $r(x)=r(y)$ and for $f\in
I(\Pi)$,  one has
$$f(x,y)=\delta(x,y).$$
\end{thm}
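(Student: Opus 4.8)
The plan is to treat the two assertions of the theorem separately, since the first is essentially a matter of unwinding the definition of $R(\Pi)$, while the second is a structural consequence of the partial order on $\Pi$.

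For the first assertion I would start from the defining property of the reduced incidence algebra: by construction $R(\Pi)$ consists exactly of those $f\in I(\Pi)$ that are constant on each $\sim$-class (type). Combining this with the characterization already established in the excerpt, namely
$$[x,y]\sim[u,v]\;\Longleftrightarrow\;\left[r(x)=r(u)\wedge r(y)=r(v)\right],$$
I would argue as follows. Fix $x,y$ with $[x,y]\in(k,n)$, so that $r(x)=k$ and $r(y)=n$. Any other segment $[u,v]$ with $r(u)=k$ and $r(v)=n$ lies in the same type $(k,n)$, hence $f(x,y)=f(u,v)$ for every such pair. Therefore the value $f(x,y)$ cannot depend on the individual vertices $x,y$ but only on the pair of ranks $(k,n)$; writing $f(k,n)$ for this common value gives $f(x,y)=f(k,n)=f(r(x),r(y))$, as claimed. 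This step is short and carries no real obstruction.

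For the second assertion I would read off the consequence of the order formula
$$(x\leq_P y)\Longleftrightarrow\left[(t<v)\vee(t=v\wedge s=u)\right]$$
in the equal-rank case. Writing $x=\langle s,t\rangle$ and $y=\langle u,v\rangle$ with $r(x)=r(y)$ forces $t=v$, so the first disjunct $t<v$ is impossible and the relation collapses to $x\leq_P y\Leftrightarrow s=u\Leftrightarrow x=y$. Hence two \emph{distinct} vertices of the same rank are never comparable. Since every $f\in I(\Pi)$ satisfies $f(x,y)=0$ unless $x\leq y$, this immediately yields $f(x,y)=0=\delta(x,y)$ whenever $x\neq y$ and $r(x)=r(y)$, which is precisely the off-diagonal content of $f(x,y)=\delta(x,y)$.

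The main obstacle is the diagonal case $x=y$: there $\delta(x,x)=1$, whereas a generic $f\in I(\Pi)$ may take an arbitrary value $f(x,x)$, so the literal identity $f(x,y)=\delta(x,y)$ cannot be expected for every $f$ at the diagonal without an additional normalization. I would therefore make the scope precise, stating the conclusion for the off-diagonal equal-rank pairs, where it holds for all $f\in I(\Pi)$, and noting that on the diagonal the equal-rank type $(k,k)$ consists solely of the singleton segments $[x,x]$ with $x\in\Phi_k$, so that the relevant quantity is the single common constant $f(k,k)$ already supplied by the first assertion. Clarifying this diagonal point is the step that needs the most care.
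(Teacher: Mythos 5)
Your proposal is correct, and it is essentially the only available argument: the paper states this theorem with ``One can also show the following'' and supplies no proof at all, so there is nothing to diverge from. Your first part is exactly the intended definitional unwinding --- constancy of $f\in R(\Pi)$ on types combined with the paper's characterization $[x,y]\sim[u,v]\Longleftrightarrow[r(x)=r(u)\wedge r(y)=r(v)]$ --- and your second part correctly reads off from $(x\leq_P y)\Longleftrightarrow[(t<v)\vee(t=v\wedge s=u)]$ that each level $\Phi_k$ is an antichain, so $f(x,y)=0$ whenever $x\neq y$ and $r(x)=r(y)$. Your diagonal caveat is moreover a genuine correction to the paper's statement, not mere pedantry: as written, $f(x,y)=\delta(x,y)$ for all $f\in I(\Pi)$ with $r(x)=r(y)$ is literally false at $x=y$ (take $f=2\delta$, so $f(x,x)=2\neq 1$), and the tenable reading is either your off-diagonal restriction or the rephrasing $f(x,y)=f(x,x)\,\delta(x,y)$, which for $f\in R(\Pi)$ becomes $f(x,y)=f(k,k)\,\delta(x,y)$ on $\Phi_k$, exactly as you note via the singleton segments constituting the type $(k,k)$. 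So the proposal both proves what the paper asserts and repairs the one point where the paper's formulation fails.
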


It is known that all elements of $I(\Pi)$ mentioned above, i.e.
functions:
  $\zeta$, $\mu$, $\zeta ^{2}$, $\zeta ^{k}$, $\eta$, $\eta^k$,
  $\mathcal{C}$, $\mathcal{C}^{-1}$, $\chi$, $\chi^k$,
  $\mathcal{M}$, $\mathcal{M}^{-1}$ are the elements of an
  arbitrary reduced incidence algebra $R(\Pi,\sim)$, (i.e. modulo an arbitrary order compatible equivalence relatione $\sim$ on
  $S(\Pi)$). Then the next results follows immediately from this fact and above theorems.

\begin{cor}
Let  $ (k,n)\in T$.  Then:
\begin{equation}
\zeta(k,n)=\left\{\begin{array}{lll}
  1 &  & k\leq n \\
  0 &  & k>n \\
\end{array}\right.;
\end{equation}

 \begin{equation}\zeta^2(k,n)=\left\{\begin{array}{lll}
   \left({\displaystyle
\sum_{i=k}^{n-1}}F_i\right)+2 &  & k\leq n \\
  0 &  & k>n \\
 \end{array}\right.;\end{equation}
\begin{equation}\eta(k,n)=\delta_{k<n}=\left\{\begin{array}{lll}
  1 &  & k<n \\
  0 &  & k\geq n ;\\
\end{array}\right.\end{equation}
\begin{equation}\eta^2(k,n)=\left\{\begin{array}{lll}
   {\displaystyle
\sum_{i=k+1}^{n-1}}F_i &  & k\leq n \\
  0 &  & k>n \\
 \end{array}\right.;\end{equation}
\begin{equation}\eta^s(k,n)=\left\{\begin{array}{lll}
  {\displaystyle
\sum_{k<i_1<...<i_{s-1}<n}}F_{i_1}F_{i_2}...F_{i_{k-1}}  &  & k\leq n \\
  0 &  & k>n \\
 \end{array}\right.;\end{equation}
\begin{equation}\mathcal{C}(k,n)=\left\{\begin{array}{lll}
   1&  & k=n \\
  -1 &  & k<n \\
  0 &  & k>n \\
\end{array}\right.;\end{equation}
\begin{equation}\chi(k,n)=\delta(k+1,n);\end{equation}
 \begin{equation}\chi^s(k,n)=\delta(k+s,n)\cdot
F_{k+1}F_{k+2}...F_{n-1};\end{equation}

\begin{equation}\mu(k,n)=\left\{\begin{array}{lll}
  (-1)^{n-k}{\displaystyle \prod_{i=k+1}^{n-1}}(F_i-1) &  & k\leq n \\
    0 &  & k>n  \\
\end{array}\right..\end{equation}
\end{cor}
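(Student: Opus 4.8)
The statement to prove is the final Corollary, which asserts closed-form expressions for the reduced-algebra images of the functions $\zeta$, $\zeta^2$, $\eta$, $\eta^2$, $\eta^s$, $\mathcal{C}$, $\chi$, $\chi^s$, and $\mu$. The plan is to reduce each identity to the corresponding formula for $I(\Pi)$ already listed as formulas (1)–(9) in Section 1, invoking the immediately preceding Theorem, which guarantees that every one of these functions lies in the reduced incidence algebra $R(\Pi)$ and therefore satisfies $f(x,y)=f(r(x),r(y))=f(k,n)$. So the entire Corollary is obtained by the substitution $t\mapsto k$, $v\mapsto n$ in the Section 1 formulas, discarding the $\delta(s,u)$ factors, which collapse to the condition $k=n$ once only ranks matter.

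Let me describe the substitution step by step. First, for $\zeta$ I would take formula (1), namely $\zeta(x,y)=\delta(s,u)\delta(t,v)+\sum_{k\geq 1}\delta(t+k,v)$, and observe that on types this becomes $1$ precisely when $k=n$ (first term) or $k<n$ (the sum), i.e.\ whenever $k\le n$, and $0$ otherwise. For $\zeta^2$, $\eta^2$, and $\eta^s$ I would simply rename the level indices $t,v$ to $k,n$ in formulas (3), (4)-iterated, and (5) respectively, noting that the rank-$k$ vertices of $x$ and $y$ carry all the information the sums depend on. The functions $\mathcal{C}$, $\chi$, and $\chi^s$ follow identically from (6), (7), (8). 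The only function requiring slightly more care is the Mbius function $\mu$: I would start from formula (2), set $k=t$ and $n=v$, note that $(-1)^{k}$ in the summand is controlled by the single surviving Kronecker delta $\delta(t+k,v)$ which forces $k=v-t=n-k$ (unfortunate index clash in the source, but the exponent is $v-t$), and so rewrite the whole alternating expression as the compact product $(-1)^{n-k}\prod_{i=k+1}^{n-1}(F_i-1)$ for $k\le n$.

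The main obstacle, such as it is, is purely bookkeeping rather than mathematical: one must check that the reduced formulas genuinely depend only on the rank data and not on the horizontal coordinates $s,u$. This is exactly what the preceding Theorem provides, so the work amounts to verifying that each Section 1 expression, once the $\delta(s,u)$ factor is either absorbed into the $k=n$ diagonal case or rendered vacuous by $k<n$, produces the stated piecewise form. I would also record the degenerate convention $(k,n)=\emptyset$ for $k>n$, which justifies the uniform ``$0$ when $k>n$'' branch appearing in every case. No genuinely new computation is needed beyond these routine substitutions, and the Corollary then follows immediately from the earlier results as the text already announces.
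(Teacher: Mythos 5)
Your proposal matches the paper's own proof essentially exactly: the paper offers no computation at all, merely asserting that since $\zeta$, $\mu$, $\eta$, $\mathcal{C}$, $\chi$ and their powers all lie in every reduced incidence algebra $R(\Pi,\sim)$, the Corollary ``follows immediately'' from the preceding theorems together with formulas (1)--(9), which is precisely your substitution $t\mapsto k$, $v\mapsto n$ with the $\delta(s,u)$ factors absorbed into the diagonal case $k=n$ via $f(x,y)=\delta(x,y)$ at equal ranks. In fact your careful execution is slightly more reliable than the printed statement: renaming indices in formula (3) yields $\bigl(\sum_{i=k+1}^{n-1}F_i\bigr)+2$ rather than the Corollary's $\bigl(\sum_{i=k}^{n-1}F_i\bigr)+2$, and in the $\eta^s$ formula the last factor should read $F_{i_{s-1}}$ rather than $F_{i_{k-1}}$, so the two discrepancies you would uncover are typographical errors in the paper, not gaps in your argument.
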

\begin{cor}
The standard reduced incidence algebra $R(\Pi)$ is the maximally
reduced incidence algebra $\overline{R}(\Pi)$, i.e. the smallest
reduced incidence algebra on $\Pi$. Equivalently the equivalence
relation defined by (\ref{sim}) is the maximal element in the
lattice  of all order compatible equivalence relations on
$S(\Pi)$.

\end{cor}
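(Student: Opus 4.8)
The plan is to read the statement through the antitone correspondence between order compatible equivalence relations and reduced incidence subalgebras: if $\approx_1$ refines $\approx_2$, then every function constant on $\approx_2$-classes is constant on $\approx_1$-classes, so $R(\Pi,\approx_2)\subseteq R(\Pi,\approx_1)$. Hence a coarser relation yields a smaller subalgebra, and ``maximally reduced'' ($=$ smallest) algebra corresponds exactly to the ``maximal'' ($=$ coarsest) relation in the lattice. Since $\sim$ given by (\ref{sim}) is already order compatible, it suffices to prove that $\sim$ is the top element, i.e. that every order compatible equivalence relation $\approx$ on $S(\Pi)$ refines $\sim$; equivalently, by the identification recorded just after (\ref{sim}), that $[x,y]\approx[u,v]$ forces $[x,y]$ and $[u,v]$ to share the same rank pair, that is, to be order isomorphic.

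First I would pin down the cheap invariants. The functions $\delta$, $\zeta$ and $\eta$ lie in every $R(\Pi,\approx)$: here $\zeta\equiv 1$ on all segments, while $\delta$ and $\eta$ only distinguish points from non-points, a distinction $\approx$ must respect because $\zeta^2(x,y)=\mathrm{card}\,[x,y]$ already separates the point type (cardinality $1$) from the rest. Consequently all powers $\zeta^k,\eta^k$ belong to $R(\Pi,\approx)$ and, by order compatibility, are constant on $\approx$-classes. From $\zeta^2$ one reads equality of cardinalities; from the largest non-vanishing $\eta^k$ one reads equality of the longest-chain length $r(y)-r(x)$; and from the whole family $\eta^k$, which by item (5) computes the elementary symmetric functions of the interior level sizes $F_{r(x)+1},\dots,F_{r(y)-1}$, one reads that $\approx$-equivalent segments have the same length and the same \emph{multiset} of interior level sizes.

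The core of the argument is to upgrade this multiset to the ordered sequence, which is exactly the order isomorphism type of the interval (an ordinal sum of antichains of sizes $1,F_{r(x)+1},\dots,F_{r(y)-1},1$). Here the symmetric data is insufficient, so I would induct on the interval length and exploit the genuinely non-symmetric incidence coefficients $\inc{k,n}{l}$ introduced above. On segments of length $0$ and $1$ the relation $\approx$ coincides with $\sim$ by the cardinality remark (a single point type, respectively a single edge type); assume $\approx=\sim$ on all segments of length $<L$. Given $[x,y]\approx[u,v]$ of length $L$, every $z$ at interior rank $r(x)+i$ produces a segment $[x,z]$ of length $i<L$, on which $\approx$-type equals iso-type by induction; counting, for the short iso-type equal to the length-$i$ prefix of $[x,y]$, the number of such $z$ recovers precisely the level size $F_{r(x)+i}$, and order compatibility forces this to equal the corresponding count $F_{r(u)+i}$ inside $[u,v]$. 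Running $i=1,\dots,L-1$ identifies the two interior sequences term by term, whence $[x,y]\cong[u,v]$ and $\approx=\sim$ on length $L$ as well.

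I expect the third step to be the real obstacle, for two reasons: separating the boundary contributions $z=x$ and $z=y$ (which the reduced product formula of the preceding Proposition folds into the endpoint factors) from the genuine interior levels, and handling pinch points where an interior size $F_i=1$, since there a single vertex can masquerade as part of several prefixes and the bookkeeping of which iso-type a short segment carries must be done with care. Once the ordered interior sequence has been recovered, the conclusion is immediate: $\approx$ refines $\sim$, so $\sim$ is the maximal order compatible equivalence relation on $S(\Pi)$, and correspondingly $R(\Pi)=\overline{R}(\Pi)$ is the smallest reduced incidence algebra on $\Pi$, as claimed.
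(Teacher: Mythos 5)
Your proposal is essentially correct, but note that the paper itself offers \emph{no} proof of this corollary: it is stated bare, with the maximality implicitly delegated to the Doubilet--Rota--Stanley machinery of \cite{24} and to the preceding theorems. So your argument is not a variant of the paper's proof --- it \emph{is} a proof where the paper has none, and it is worth comparing to what the paper's citation-level route would give. Your three-step scheme is sound: the antitone correspondence between compatible relations and reduced subalgebras is the standard one; the invariants $\delta,\zeta,\eta$ and their convolution powers do lie in every $R(\Pi,\sim)$ (the cardinality function $\zeta^2$ isolates the point type, whence $\delta$, whence $\eta=\zeta-\delta$, and the largest $k$ with $\eta^k\neq 0$ makes segment \emph{length} an invariant of every compatible relation --- this last point is what legitimizes your indicator functions below); and the inductive prefix-counting step works, because in a cobweb poset every segment $[x,y]$ with $r(x)=k$, $r(y)=k+L$ is the ordinal sum $\{x\}\oplus\Phi_{k+1}\oplus\dots\oplus\Phi_{k+L-1}\oplus\{y\}$, so \emph{all} $z$ at level $k+i$ give the same iso-type for $[x,z]$, and convolving the indicator of that type against $\zeta$ returns exactly $F_{k+i}$. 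Your two anticipated obstacles in fact dissolve: the boundary terms $z=x$, $z=y$ contribute nothing because iso-type determines length and $0<i<L$; and pinch points $F_i=1$ cause no ambiguity because in an ordinal sum of antichains the rank function, hence the \emph{ordered} sequence of level sizes, is an order invariant --- a length-$i$ segment can never be isomorphic to a length-$j$ segment for $i\neq j$, nor can two prefixes of the same length differ only by a permutation of levels.

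Two slips deserve flagging, one cosmetic and one substantive. Cosmetically, your induction hypothesis should read ``$\approx$ \emph{refines} $\sim$ on segments of length $<L$,'' not ``$\approx=\sim$'': the identity relation is order compatible and does not coincide with $\sim$ even on points, so the claimed equality in your base case and conclusion is false as stated --- but your counting step only ever uses the refinement direction ($[a,b]\approx[c,d]$ implies isomorphic), so this is a wording error, not a gap. Substantively, you adopt the paper's identification of isomorphism type with rank pair ($r(x)=r(u)\wedge r(y)=r(v)$), which is actually false for degenerate sequences: if $F$ is constant (e.g.\ $F_n=1$, making $\Pi$ a chain), segments with different rank pairs are isomorphic, and the rank-pair relation is then \emph{not} maximal, since the coarser isomorphism (= equal length) relation is compatible. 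Your actual argument, fortunately, concludes $[x,y]\cong[u,v]$ and never needs equality of rank pairs, so it proves the corollary in the form in which (\ref{sim}) defines $\sim$ --- literal isomorphism --- which is the form in which the statement is true. In short: a correct and genuinely self-contained proof, more explicit than anything in the paper, once the induction hypothesis is restated as refinement and the rank-pair gloss is dropped.
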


\noindent {\bf Acknowledgements}\\
Discussions with Participants of Gian-Carlo Rota Polish Seminar,\\
http://ii.uwb.edu.pl/akk/sem/sem\_rota.htm are highly appreciated.

 \end{document}